\theoremstyle{plain} \newtheorem{theorem}{Theorem}
\theoremstyle{plain} \newtheorem{lemma}[theorem]{Lemma}
\theoremstyle{plain} \newtheorem{corollary}[theorem]{Corollary}
\theoremstyle{definition} \newtheorem{remark}[theorem]{Remark}
\newcommand{\mN}{\mathbb{N}}
\newcommand{\mZ}{\mathbb{Z}}
\newcommand{\id}{\operatorname{Id}}
\begin{document}

\title[Reversible posets]{Characterization of hereditarily reversible posets}

\author[Micha{\l} Kukie{\l}a]{Micha{\l} Kukie{\l}a*}

\newcommand{\acr}{\newline\indent}

\address{\llap{*\,}Faculty of Mathematics and Computer Science\acr Nicolaus Copernicus University\acr
 ul. Chopina 12{\slash}18\acr
87-100 Toru\'{n}\acr 
POLAND}

\email{mckuk@mat.umk.pl}

\thanks{The author would like to thank Bernd Schr\"{o}der for reading a preliminary version of this paper. During the preparation of this paper the author has been supported by the joint PhD programme `\'{S}rodowiskowe Studia Doktoranckie z Nauk Matematycznych'.}

\subjclass[2010]{Primary 06A06; Secondary 54F99}
\keywords{order preserving bijection, automorphism, hereditarily reversible poset, continuous bijection, homeomorphism, hereditarily reversible topological space}

\begin{abstract}
A poset $P$ is called reversible if every order preserving bijective self map of $P$ is an order automorphism. $P$ is called hereditarily reversible if every subposet of $P$ is reversible. We give a~complete characterization of hereditarily reversible posets in terms of forbidden subsets. A similar result is stated also for preordered sets. As a corollary we extend the list of known examples of hereditarily reversible topological spaces.
\end{abstract}

\maketitle

\section{Introduction}
It is a common knowledge that, loosely speaking, a bijective morphism need not be an isomorphism, although in many categories (like, say, the categories of sets and functions, groups and homomorphisms, Banach spaces and continuous linear maps) it is so. For example a bijective continuous map between topological spaces may fail to be a homeomorphism. Such phenomena also appear in the category of partially ordered sets (posets) and order preserving maps: consider a bijection from a two element antichain to a two element chain for a simplest example.

Even more is true. One may easily come up with examples of pairs $X,Y$ of non-isomorphic posets (or topological spaces) that are \textit{bijectively related}, i.e. such that bijective maps $X\to Y$ and $Y\to X$ exist.

However, if a poset $X$ is \textit{reversible}, which means that every bijective order preserving map $X\to X$ is an isomorphism, then every poset $Y$ bijectively related to $X$ is in fact isomorphic to $X$. (Reversibility is here a sufficient, but not a necessary condition.) The same may be said about topological spaces and other categories.

The above properties have been studied for example in \cite{doyle,wilansky} in the case of topological spaces. For posets the study was undertaken by the author in \cite{kukiela}. For similar (though not entirely analogous) results in the category of graphs see \cite{bonato} and related papers.

The paper \cite{wilansky} gives a number of examples of hereditarily reversible topological spaces (i.e. spaces containing only reversible subspaces), including only one  example that the authors call non-trivial (which is $\mN\cup\{x\}$ for some $x\in\beta\mN\smallsetminus \mN$). In \cite{kukiela} it was shown that partially well ordered sets are hereditarily reversible. The correspondence between preorders and Alexandroff topological spaces via the specialization preorder (see for example \cite{alexandroff,erne}) allowed the author to give new examples of hereditarily reversible topological spaces as a corollary.

In this note we continue this study, giving a complete characterization of the classes of hereditarily reversible posets and of hereditarily reversible preorders in terms of forbidden subsets. Doing so, we also extend the list of known examples of hereditarily reversible topological spaces.

\section{Main results}
Given a poset $(P,\leq)$ we will omit $\leq$ in notation (i.e. speak of the poset $P$) if this should not lead to confusion.

A poset $P$ is called reversible if every order preserving bijective self map of $P$ is an automorphism. $P$ is called hereditarily reversible if every subposet of $P$ is reversible.

Let $P,Q$ be partially ordered sets and let $p,p'\in P$. By $P\sqcup Q$ we denote the disjoint union and by $P\oplus Q$ the linear sum of $P$ and $Q$. By $P^d$ we denote the dual poset of $P$. If $f\colon P\to P$ is an order preserving bijection and $k\in\mZ$, then by $p^k$ we denote $f^k(p)$. By $p\sim p'$ we denote the fact that $p$ and $p'$ are comparable; $p\!\downarrow$ is the set $\{a\in P:a\leq p\}$; $p\!\uparrow$ is defined dually. For $A\subseteq P$ we define $A\!\downarrow=\cup_{a\in A}a\!\downarrow$ and $A\!\uparrow$ dually. By $p\prec p'$ we denote the fact that $p'$ is a cover of $p$, i.e. a minimal element in $p\!\uparrow$. $\omega$ denotes the least infinite ordinal, $D_\infty$ the antichain of cardinality $\aleph_0$ and $D_1$ the one-element poset.

We define
\begin{align*} F_1&=D_\infty\sqcup\omega, & F_2&=(D_1\oplus D_\infty)\sqcup D_\infty, & F_3&=D_\infty\sqcup \bigsqcup_{i=0}^{\infty}(D_1\oplus D_1),\\
F_4&=D_\infty\oplus\omega, & F_5&=(D_1\sqcup \omega^d)\oplus\omega,
\end{align*}
$F_6,F_7$ are pictured in Figure \ref{fig1} and $F_8$ has the vertex set $\{a_i\}_{i\in\mZ}\cup\{b_i\}_{i\in\mZ}$ ordered by the relation $\leq=\{(a_i,b_j):i>0 \text{ or } j\not=i\}$. 

\begin{figure}[ht]
\centering
$
\xymatrix@R=5mm@C=7mm{
\vdots\ar@{-}[d] \\
\bullet\ar@{-}[d] \\ 
\bullet\ar@{-}[d]\ar@{-}[dr] \\ 
\bullet\ar@{-}[d]\ar@{-}[dr] & \bullet\ar@{-}[d]\ar@{-}[dl] \\ 
\bullet\ar@{-}[d]\ar@{-}[dr] & \bullet\ar@{-}[d]\ar@{-}[dl] \\ 
\vdots & \vdots
}\hspace{3cm}
\xymatrix@R=5mm@C=7mm{
\vdots\ar@{-}[d] \\
\bullet\ar@{-}[d]\ar@{-}[dddrrr] \\ 
\bullet\ar@{-}[d]\ar@{-}[ddrr]  \\ 
\bullet\ar@{-}[d]\ar@{-}[dr] \\
\bullet & \bullet & \bullet & \bullet & \ldots
} 
$
\caption{Posets (from left to right) $F_6$ and $F_7$}\label{fig1}
\end{figure}
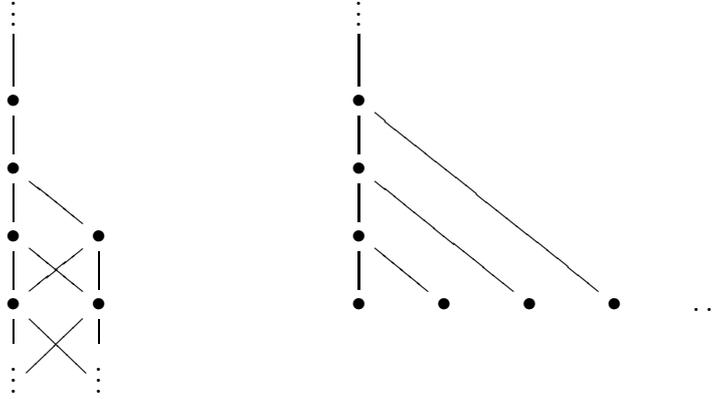

The posets $F_i$ are not reversible and $F_i$ is not isomorphic to a subset of $F_j$ or $F_j^d$ for $i,j=1,\ldots,8$, $i\not=j$. It is interesting that the complements of the comparability graphs of $F_1,F_2,F_3$ are isomorphic to the comparability graphs of, respectively, $F_4$, $F_5$, $F_6$, the comparability graph of $F_7$ is ``nearly'' isomorphic to its own complement (they differ by one vertex) and the complement of the comparability graph of $F_8$ is not a comparability graph. (However, its bipartite complement is isomorphic to the comparability graph of $F_3$.)

\begin{theorem}\label{thm1}
If $P$ is a poset that is not reversible, then $P$ contains a subset isomorphic to $F_i$ or $F_i^d$ for some $i\in\{1,\ldots,8\}$. 
\end{theorem}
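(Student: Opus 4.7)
The plan is to start from a witness of non-reversibility and read off the forbidden subposets from its orbit structure. Let $f\colon P\to P$ be an order-preserving bijection that fails to be an automorphism, and pick $p,q\in P$ with $p\not\leq q$ but $f(p)\leq f(q)$. Write $a_n=f^n(p)$ and $b_n=f^n(q)$ for every integer $n$ at which $f^n$ is defined on the relevant element (all of $\mZ$ if the orbit is bi-infinite, cyclic indices otherwise). The basic bookkeeping is then: applying $f$ repeatedly to $a_1\leq b_1$ gives $a_n\leq b_n$ for all $n\geq 1$, while if $a_n\leq b_n$ were to hold for some $n\leq 0$ then applying $f$ an appropriate number of times would force $p\leq q$; hence $a_n\not\leq b_n$ for all $n\leq 0$.

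An immediate consequence is that the orbits of $p$ and $q$ cannot both be finite cycles: if both had periods dividing some $N\geq 1$, the preceding observation would give $a_0=a_N\leq b_N=b_0$, contradicting $p\not\leq q$. So at least one orbit is infinite. Replacing $f$ by $f^{-1}$ on $P^d$ if necessary (which dualizes the $F_i$'s and swaps the ``past'' with the ``future'' of the witness), we may arrange that the forward orbit of $p$ is infinite. From here I would run a case analysis organized by three features: whether the orbit of $q$ is finite or infinite; whether the two orbits intersect or are disjoint; and, inside each infinite orbit, the pattern of comparabilities. For the third point I would use a Ramsey-type dichotomy on the sequence $\{a_n\}$ (and similarly on $\{b_n\}$), extracting either an infinite subchain or an infinite subantichain, so that the eventual matching against the finite list of $F_i$'s is made on a configuration that is already very constrained.

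In each case the goal is to exhibit a concrete subset of $P$ isomorphic to some $F_i$ or $F_i^d$. For instance, if the orbit of $p$ thins out to an ascending $\omega$-chain while the orbit of $q$ contains an infinite antichain disjoint from it, one recovers $F_1$; if the cross-orbit inequalities $a_n\leq b_m$ are forced to hold for all sufficiently large $n$ regardless of $m$, one picks up the sum-type posets $F_4$ or $F_5$; the variants $F_2$ and $F_3$ arise when an extra element or extra covers appear under the antichain; $F_6$ and $F_7$ capture the geometrically more intricate ``cascading'' patterns of covers that can be forced among the $a_n$'s and $b_n$'s; and a pair of disjoint bi-infinite orbits carrying exactly the diagonal non-comparabilities $a_n\not\leq b_n$ for $n\leq 0$ reproduces $F_8$ verbatim.

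The main obstacle is the taxonomy itself: with eight forbidden posets and their duals, one must partition all configurations of two $f$-orbits (with their intra- and cross-orbit comparabilities) so that every possibility lands in one of the sixteen boxes. Two technical points demand special care. First, one must actually \emph{choose} the witness $(p,q)$ carefully, perhaps by a minimality argument on the index of comparability, so that the orbit structure near index $0$ is not cluttered by accidental comparabilities that spoil the match with a specific $F_i$. Second, when an infinite antichain or chain is needed inside an orbit, the Ramsey extraction produces an isomorphic copy that sits as a genuine \emph{subposet} of $P$; verifying that the cross-orbit data remain undisturbed after such a thinning is the sort of bookkeeping where most of the real work of the proof will live.
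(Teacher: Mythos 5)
Your opening moves are sound and coincide with the paper's: take a witness pair $p\not\leq q$ with $f(p)\leq f(q)$, iterate $f$, note that $f^n(p)\leq f^n(q)$ for $n\geq 1$ while $f^n(p)\not\leq f^n(q)$ for $n\leq 0$, and conclude that not both orbits are finite cycles. But from that point on the proposal is a plan rather than a proof, and the part you defer is precisely the content of the theorem. The statements ``the variants $F_2$ and $F_3$ arise when an extra element or extra covers appear under the antichain'', ``$F_6$ and $F_7$ capture the geometrically more intricate cascading patterns'', and ``a pair of disjoint bi-infinite orbits carrying exactly the diagonal non-comparabilities reproduces $F_8$'' are descriptions of hoped-for outcomes, not arguments; you give no proof that your trichotomy (orbit finiteness, orbit intersection, chain/antichain pattern) is exhaustive, nor that each configuration really contains one of the fourteen posets. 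The paper's proof shows where the real work lies: one needs the reduction (replacing $f$ by a power $f^n$) to the situation where an orbit with $f(x)\not\sim x$ is an honest antichain, on pain of finding $F_1$; one needs the lemma that an antichain $A$ and an $\omega$-chain $C$ with no $c\leq a$ always contain $F_1$, $F_4$ or $F_7$ (your Ramsey extraction alone does not control the cross-relations, which you yourself flag as ``where most of the real work will live''); one needs the interleaving lemma producing $F_5$ or $F_6$ from two descending chains with $a_i\not\sim b_i$; and the bipartite case of two antichain orbits splits further into the $F_2^d$, $F_8$ and $F_3$ subcases via a careful choice of indices. None of this is present, so the proposal does not establish the theorem.

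There is also a concrete technical error in the one reduction you do attempt: ``replacing $f$ by $f^{-1}$ on $P^d$'' is not available, because $f^{-1}$ is order preserving (on $P$ or on $P^d$) exactly when $f$ is an automorphism, which is the situation you have excluded. The only legitimate symmetry is to keep $f$ and pass to $P^d$, and this swaps the roles of the two witness elements (the pair $(p,q)$ becomes $(q,p)$) rather than exchanging forward and backward orbits; your intended normalization ``the forward orbit of $p$ is infinite'' can be salvaged this way, but as written the step is wrong, and it is symptomatic of the level of care the omitted case analysis would demand. In short: same general strategy as the paper (orbit analysis of a witness pair plus chain/antichain extraction and matching against the list), but the exhaustive taxonomy and the two key structural lemmas that make it work are missing, so there is a genuine gap.
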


Since $F_3$ and $F_8$ are self-dual, there are in total 14 forbidden subsets. Before we prove the theorem, note the following immediate corollary.

\begin{corollary}\label{cor}
Let $P$ be a poset. Then $P$ is hereditarily reversible if no subset of $P$ is isomorphic to any of the sets $F_i$ or $F_i^d$, $i=1,\ldots,8$.
\end{corollary}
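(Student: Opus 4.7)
The plan is that this corollary is immediate from Theorem~\ref{thm1}, so the argument is almost a one-liner and the real content is the theorem itself. I just need to unpack what ``hereditarily reversible'' means and apply the contrapositive of the theorem to each subset.

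First I would fix an arbitrary subset $Q\subseteq P$ and observe the trivial but essential point that every subset of $Q$ is also a subset of $P$. Hence if $P$ satisfies the hypothesis of the corollary, namely that no subset of $P$ is isomorphic to any of the posets $F_i$ or $F_i^d$ with $i=1,\ldots,8$, then no subset of $Q$ is isomorphic to any such poset either. By the contrapositive of Theorem~\ref{thm1}, $Q$ is reversible. Since $Q$ was arbitrary, every subposet of $P$ is reversible, which by definition means that $P$ is hereditarily reversible.

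There is essentially no obstacle here; the work has all been deferred to Theorem~\ref{thm1}. It is worth remarking that the converse of the corollary also holds: since the posets $F_i$ are themselves not reversible (as noted in the text just before the statement of the theorem), any poset containing one of them as a subset fails to be hereditarily reversible. Thus the forbidden-subset condition actually characterizes hereditary reversibility, even though the corollary only records the ``if'' direction.
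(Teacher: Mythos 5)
Your argument is exactly the one the paper intends: the corollary is stated as an immediate consequence of Theorem~\ref{thm1}, and your contrapositive argument (subsets of a subset $Q\subseteq P$ are subsets of $P$, so $Q$ avoids all $F_i$, $F_i^d$ and is reversible by the theorem) is correct and complete.
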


The proof of the theorem is based on the following four lemmas, first of which is a well known fact in order theory.

\begin{lemma}\label{main}
Let $X$ be a poset. If $X$ is infinite, then it contains an infinite chain or an infinite antichain.
\end{lemma}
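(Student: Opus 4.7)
The plan is to apply the infinite Ramsey theorem for pairs to the comparability relation on $X$. Consider the complete graph on the vertex set $X$ and color each unordered pair $\{x,y\}$ with one of two colors according to whether $x\sim y$ or not. Since $X$ is infinite, Ramsey's theorem yields an infinite subset $Y\subseteq X$ all of whose pairs have the same color. If the common color is ``incomparable,'' then any two distinct elements of $Y$ are incomparable, so $Y$ is an infinite antichain. If the color is ``comparable,'' then any two elements of $Y$ are comparable, and since $\leq$ is a partial order the induced order on $Y$ is total, so $Y$ is an infinite chain.

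If one wanted to avoid citing Ramsey, a direct recursive construction works as well. Assuming $X$ has no infinite antichain, one can pick $x_1\in X$ and, noting that the set of elements incomparable with $x_1$ cannot exhaust $X$ modulo a finite antichain build-up, choose among the elements comparable with $x_1$ those lying above or below $x_1$ — one of these two cones must be infinite. Iterating inside that cone and monotonizing the direction produces an infinite chain. However, the Ramsey argument is the cleanest way to formalize this dichotomy, and the lemma is invoked here merely as a standard tool; I would simply state it and cite (or directly apply) Ramsey's theorem.

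The proof presents no real obstacle; the only thing to check carefully is the trivial observation that a set of pairwise comparable elements of a poset is automatically a chain (totally ordered), which is immediate from the definition of a partial order.
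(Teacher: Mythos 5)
Your Ramsey argument is correct and complete: coloring pairs by comparability and extracting an infinite homogeneous set (after passing to a countably infinite subset of $X$ if necessary) gives exactly the dichotomy claimed, and pairwise comparability in a poset does yield a chain. The paper itself offers no proof of this lemma — it is stated as ``a well known fact in order theory'' — so there is nothing to diverge from; your Ramsey proof is the standard one and would serve as a citation-free justification. Your second, ``direct recursive'' sketch is vaguer (the step of ``monotonizing the direction'' hides the real work of choosing an infinite up-cone or down-cone coherently at each stage), but since you only offer it as an alternative and rely on Ramsey for the actual proof, this does not affect correctness.
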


\begin{lemma}\label{infinite_tail}
Let $X$ be a poset and $h\colon X\to X$ an order preserving bijection. Let $x\in X$.
\begin{enumerate}[a.]
\item \label{pt2} If $h(x)>x$, then $\{x^{-n}\}_{n\in\mN}$ does not contain a subset isomorphic to $\omega$.
\item \label{pt3} If $x\not\sim x^k$ for any $k\in\mN$, then $\{x^{-n}\}_{n\in\mN}$ is an infinite antichain. Moreover, if $x^k\not\sim x^l$ for any $k,l\in\mN$, $k\not=l$, then $\{x^m\}_{m\in\mZ}$ is an infinite antichain.
\item \label{pt4} If $\{x^{-k}\}_{k\in\mN}$ contains an infinite descending chain, then there is an infinite descending chain in $\{x^{-k}\}_{k\in\mN}\cap x\!\downarrow$.
\end{enumerate}
\end{lemma}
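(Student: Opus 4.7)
The unifying observation for all three parts is that $h^k$ is an order isomorphism of $P$ onto itself for every $k\in\mZ$, so comparability, equality, and incomparability between two members of the orbit $\{x^m\}_{m\in\mZ}$ are invariant under simultaneously shifting both indices: $x^a\sim x^b$ iff $x\sim x^{b-a}$, and likewise for equality. Each part follows from this translation principle, combined with an appropriate shift that normalizes one of the indices to $0$.

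For part (\ref{pt2}), I would iterate the assumption $x^{1}>x^{0}=x$ by applying $h^{-n}$ for $n\in\mN$: because $h^{-n}$ is order preserving, $x^{1-n}>x^{-n}$ for every $n$, so $x>x^{-1}>x^{-2}>\cdots$ is strictly descending. Hence $\{x^{-n}\}_{n\in\mN}$ is order-isomorphic to $\omega^d$, which has a greatest element and no infinite strictly ascending chain, and therefore cannot contain a subset isomorphic to $\omega$.

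For part (\ref{pt3}), assume $x\not\sim x^k$ for all $k\geq 1$. If $x^{-n}$ and $x^{-m}$ were comparable (or equal) for some $n<m$ in $\mN$, applying $h^{m}$ and invoking the translation principle would give $x^{m-n}\sim x$ (respectively equal), contradicting the hypothesis with $k=m-n\geq 1$. Thus $\{x^{-n}\}_{n\in\mN}$ consists of pairwise distinct, pairwise incomparable elements. Under the stronger assumption $x^k\not\sim x^l$ for all distinct $k,l\in\mN$, the same argument applied to an arbitrary pair $x^k,x^l$ with $k<l$ in $\mZ$, shifted this time by $h^{-k}$, reduces any hypothetical comparability $x^k\sim x^l$ to $x\sim x^{l-k}$, yielding the desired contradiction and proving that $\{x^m\}_{m\in\mZ}$ is an infinite antichain.

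For part (\ref{pt4}), let $y_1>y_2>y_3>\cdots$ be an infinite descending chain inside $\{x^{-k}\}_{k\in\mN}$, and write $y_i=x^{-n_i}$. Since the $y_i$ are distinct, the natural numbers $n_i$ are pairwise distinct and therefore $n_i\to\infty$. Applying the order isomorphism $h^{n_1}$ to the chain produces
\[ x\;>\;x^{n_1-n_2}\;>\;x^{n_1-n_3}\;>\;\cdots, \]
and once $n_i>n_1$ the term $x^{n_1-n_i}$ is of the form $x^{-k}$ with $k\in\mN$ and lies strictly below $x$. The tail of this shifted chain is therefore the required infinite descending chain in $\{x^{-k}\}_{k\in\mN}\cap x\!\downarrow$. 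None of the three parts presents a genuine obstacle; the only thing that requires care is to choose the direction of the shift so that the transported elements land inside the prescribed indexing range, which is why shifting by $h^{n_1}$ (rather than by $h^{-n_1}$) is the correct normalization in part (\ref{pt4}).
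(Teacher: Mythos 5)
There is a genuine gap, and it sits at the very foundation of your argument: the ``unifying observation'' that $h^k$ is an order isomorphism for every $k\in\mZ$ is false. The hypothesis is only that $h$ is an order preserving \emph{bijection}; its inverse need not preserve order (this failure is precisely the phenomenon the whole paper studies), so comparability is only transported \emph{forward}: $x^a\sim x^b$ implies $x^{a+k}\sim x^{b+k}$ for $k\geq 0$, and the converse implication is unavailable. This breaks your proof of part (\ref{pt2}): applying $h^{-n}$ to $x^1>x^0$ to get $x^{1-n}>x^{-n}$ is exactly an application of $h^{-1}$ being order preserving, which is not given. Moreover, your intermediate conclusion --- that $\{x^{-n}\}_{n\in\mN}$ is a descending chain isomorphic to $\omega^d$ --- is simply false in general. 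Take $X=F_1=D_\infty\sqcup\omega$ with $D_\infty=\{d_n\}_{n\geq 1}$, $\omega=\{c_n\}_{n\geq 0}$, and $h$ defined by $h(c_n)=c_{n+1}$, $h(d_1)=c_0$, $h(d_{n+1})=d_n$: this is an order preserving bijection, $h(c_0)>c_0$, yet the backward orbit of $c_0$ is an infinite antichain. The statement (\ref{pt2}) itself survives, but it needs an argument using only positive powers, e.g.: $x<x^1<\dots<x^m$ for all $m\geq 1$, and if $x^{-n}>x^{-k}$ with $n>k$ then applying $h^n$ gives $x>x^{n-k}$, a contradiction; hence along any ascending chain in $\{x^{-n}\}_{n\in\mN}$ the exponents must strictly decrease, so no copy of $\omega$ can occur. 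This is the paper's (one-line) proof.

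Parts (\ref{pt3}) and (\ref{pt4}) are essentially correct, because the steps you actually execute only use forward powers: in (\ref{pt3}) you apply $h^m$ with $m>0$ to a hypothetical comparability $x^{-n}\sim x^{-m}$, and in (\ref{pt4}) you apply $h^{n_1}$ and pass to the tail where $n_i>n_1$, which is the paper's argument (the paper instead extracts a subchain with increasing exponents and shifts by the first one --- the same idea). Two caveats: your justification of these steps via the false ``translation principle'' should be replaced by the one-sided statement above; and in the ``moreover'' clause of (\ref{pt3}) you shift a pair $x^k\sim x^l$, $k<l$ in $\mZ$, by $h^{-k}$, which is again an inverse application when $k>0$ --- harmless, since for $k\geq 0$ the hypothesis applies directly and only the case $k<0$ (where $h^{-k}$ is a positive power) needs the shift, but it should be said that way.
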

\begin{proof}
\begin{enumerate}[a.]
\item $x^{-n}\not > x^{-k}$ for $n>k$, since otherwise $x> x^{n-k}$.
\item If $x^{-m}\sim x^{-n}$ for some $m,n\in\mN$, then $x\sim x^{|m-n|}$. The second statement follows immediately.
\item Let $\{x^{-k_i}\}_{i\in\mN}$, $k_i<k_{i+1}$ be an infinite descending chain. Then $\{x^{k_0-k_i}\}_{i\in\mN}$ is an infinite descending chain in $x\!\downarrow$. 
\end{enumerate}
\end{proof}

\begin{lemma}\label{no_antichains}
Let $X$ be a poset. If $A\subseteq X$ is isomorphic to $D_\infty$, $C\subseteq X$ is isomorphic to $\omega$ and $c\not\leq a$ for all $a\in A$ and $c\in C$, then $A\cup C$ contains a subset isomorphic to $F_1$, $F_4$ or $F_7$. This happens in particular when, given an order preserving bijection $h\colon X\to X$, $h(x)>x$ for some $x\in X$, $C=\{x^n\}_{n\in\mN}$ and $\{x^{-k}\}_{k\in\mN}$ contains an infinite antichain $A$.
\end{lemma}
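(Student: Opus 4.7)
The plan is to partition $A$ according to how each of its elements relates to the chain $C$. Fix an enumeration $C = \{c_0 < c_1 < c_2 < \cdots\}$. Since no $c \in C$ satisfies $c \leq a$, every $c_n$ is either strictly above $a$ or incomparable with $a$; and because $C$ is a chain, the set $S(a) := \{n \in \mN : a < c_n\}$ is an upward-closed (possibly empty) subset of $\mN$. When $S(a) \neq \emptyset$ I will write $n(a) := \min S(a)$. I then distinguish three mutually exhaustive cases on $A$, each of which forces one of $F_1$, $F_4$, $F_7$ inside $A \cup C$.

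\textbf{Case 1.} Infinitely many $a \in A$ satisfy $S(a) = \emptyset$. Taking any such infinite subset $A' \subseteq A$, each element of $A'$ is incomparable with every $c_n$, so $A' \cup C \cong D_\infty \sqcup \omega = F_1$.

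\textbf{Case 2.} All but finitely many $a \in A$ have $S(a) \neq \emptyset$, and some fixed $n_0 \in \mN$ satisfies $n(a) = n_0$ for infinitely many $a$. Collecting these into $A'$, every $a \in A'$ lies below every element of $\{c_n\}_{n \geq n_0}$, giving $A' \cup \{c_n\}_{n \geq n_0} \cong D_\infty \oplus \omega = F_4$.

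\textbf{Case 3.} Otherwise $n(\cdot)$ is finite-to-one on a cofinite subset of $A$, so its image in $\mN$ is infinite. Pick $a_1, a_2, \ldots \in A$ with $n(a_1) < n(a_2) < \cdots$ and set $b_k := c_{n(a_{k+1})}$ for $k \geq 1$. Then $b_1 < b_2 < \cdots$ is an $\omega$-chain, the $a_j$ form an antichain, and $b_k > a_j$ iff $n(a_{k+1}) \geq n(a_j)$ iff $j \leq k+1$. Comparing with Figure~\ref{fig1}, this is exactly the comparability pattern of $F_7$ (the $k$-th chain element sits above precisely $k+1$ antichain elements).

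For the final ``in particular'' sentence I still have to verify the two hypotheses when $C = \{x^n\}_{n \in \mN}$ and $A \subseteq \{x^{-k}\}_{k \in \mN}$. Iterating order-preservation of $h$ on $x < h(x)$ gives $x < x^1 < x^2 < \cdots$, so $C \cong \omega$; and if $x^n \leq x^{-k}$ for some $n,k \geq 1$, then applying $h^k$ yields $x^{n+k} \leq x$, contradicting $x < x^{n+k}$.

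The main delicate point is Case 3: one must check that the induced comparabilities between the chosen $a_j$'s and $b_k$'s reproduce $F_7$ exactly, with no extra relations. This is automatic because $A$ is an antichain, $C$ is a chain, and all cross-relations are controlled by the $n(a_j)$'s; Cases 1 and 2 are essentially immediate once the partition based on $S(a)$ and $n(a)$ is in place.
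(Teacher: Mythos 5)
Your proposal is correct and follows essentially the same trichotomy as the paper's proof: infinitely many elements of $A$ incomparable to all of $C$ gives $F_1$, a single chain element dominating infinitely many antichain elements gives $F_4$, and otherwise a diagonal choice produces $F_7$ (the case the paper dismisses as ``easy to see'', which you work out explicitly via the $n(a)$'s). The verification of the ``in particular'' clause is also fine, modulo the trivial remark that one may discard $x=x^0$ from $A$ so that the hypothesis $c\not\leq a$ holds for all pairs.
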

\begin{proof}
If there exist infinite subsets $A'\subseteq A$, $C'\subseteq C$ such that $a'\not\sim c'$ for any $a'\in A', c'\in C'$, then $A'\cup C'$ is isomorphic to $F_1$. If there is a $c\in C$ with $c\!\downarrow\cap A$ infinite, then $(c\!\uparrow\cap C)\cup (c\!\downarrow\cap A)$ is isomorphic to $F_4$. Otherwise, it is easy to see $A\cup C$ contains an isomorphic copy of $F_7$. 
\end{proof}

\begin{lemma}\label{skyscraper}
If a poset $X$ may be partitioned into two sets $A=\{a_0>a_1>\ldots\},B=\{b_0>b_1>\ldots\}$that are both isomorphic to $\omega^d$ and $a_i\not\sim b_i$ in $X$ for all $i\in\mathbb{N}$, then there is a subset $C\subseteq X$ such that $C\oplus\omega$ is isomorphic to $F_5$ or $F_6$. 
\end{lemma}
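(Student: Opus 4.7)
The plan is a case split based on how many comparabilities exist between $A$ and $B$. The crucial preliminary observation is a propagation property: because $A$ and $B$ are chains, a single comparability $a_i < b_j$ forces $a_{i'} < b_{j'}$ for all $i' \geq i$ and $j' \leq j$ by transitivity, and dually $a_i > b_j$ forces $a_{i'} > b_{j'}$ for all $i' \leq i$ and $j' \geq j$. Combined with the hypothesis $a_i \not\sim b_i$, this rigidly constrains the comparability pattern: for each $i$, the set $U_i = \{j : b_j > a_i\}$ is a finite initial segment of $\mN$ contained in $\{0,\ldots,i-1\}$ (otherwise $b_i > a_i$), and $D_i = \{j : b_j < a_i\}$ is either empty or a final segment $\{d_i, d_i+1, \ldots\}$ with $d_i > i$; symmetric statements hold for the sets $U'_j, D'_j$ obtained by swapping the roles of $A,B$.

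Case A is that $D_i = \emptyset$ for some $i$, or symmetrically $D'_j = \emptyset$ for some $j$. In the first subcase $a_i$ is comparable with only the finitely many $b_j$ in $U_i \subseteq \{0,\ldots,i-1\}$; taking $N = i$, the set $C = \{a_i\} \cup \{b_j : j \geq N\}$ consists of an isolated point together with a descending chain of type $\omega^d$, so $C \cong D_1 \sqcup \omega^d$ and $C \oplus \omega \cong F_5$. The other subcase is handled symmetrically.

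In Case B, every $D_i$ and every $D'_j$ is nonempty, furnishing witnesses $d_i > i$ with $a_i > b_{j'}$ for $j' \geq d_i$ and $m_j > j$ with $b_j > a_{i'}$ for $i' \geq m_j$. I would define inductively $k_0 = 0$ and $k_{n+1} = \max(d_{k_n}, m_{k_n})$; since $d_{k_n}, m_{k_n} > k_n$, the sequence is strictly increasing. Set $C = \{a_{k_n}, b_{k_n} : n \in \mN\}$. Routine transitivity along the chains gives, for all $n < m$, the four relations $a_{k_n} > a_{k_m}$, $a_{k_n} > b_{k_m}$, $b_{k_n} > b_{k_m}$, $b_{k_n} > a_{k_m}$, while $a_{k_n} \not\sim b_{k_n}$ is free from the lemma hypothesis because both elements share the index $k_n$. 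Thus $C$ is isomorphic to the bottom two-chain substructure of $F_6$, and $C \oplus \omega \cong F_6$.

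The main obstacle is spotting the propagation/closure properties that pin the cross-comparabilities into the rigid "staircase" pattern; once they are in hand, the dichotomy is forced, and the only real design decision in Case B is to use matched indices $i = j = k_n$ so that $a_{k_n} \not\sim b_{k_n}$ is handed over by the hypothesis rather than needing to be arranged separately.
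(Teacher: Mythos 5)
Your proof is correct and takes essentially the same route as the paper: your dichotomy (some $a_i$ or $b_j$ with no element of the other chain below it, versus every element having one) is equivalent to the paper's case split on finitely versus infinitely many cross-coverings, and your Case B recursion $k_{n+1}=\max(d_{k_n},m_{k_n})$ is literally the paper's construction of the zigzag inside $F_6$. Working with comparabilities instead of covering relations merely streamlines the $F_5$ case (isolated point plus a tail of the other chain); the substance coincides.
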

\begin{proof}
Order on $X$ is completely described by the coverings $a_{i+1}\prec a_i$, $b_{i+1}\prec b_i$, $i\in\mathbb{N}$ and coverings of the following two types: $a_{i'}\prec b_{j'}$ for some $i'>j'$ and $b_{i''}\prec a_{j''}$ for some $i''>j''$.

If there are only finitely many coverings of the type $a_{i'}\prec b_{j'}$, then let \[I=\max\{i':\text{there is a covering }a_{i'}\prec b_{j'} \text{ for some } j'\}\] and put $C=\{b_I\}\cup (a_{I}\!\downarrow\cap A)$. If there are finitely many coverings of the type $b_{i''}\prec a_{j''}$, proceed analogously. $C\oplus\omega$ is isomorphic to $F_5$.

If there are infinitely many coverings of both types, then let $k_0=0$. If $k_n$ is defined, let $k_{n+1}=\max(\min\{j:a_{k_n}>b_j\},\min\{j:b_{k_n}>a_j\})$. Put $C=\bigcup_{n\in\mN}\{a_{k_n},b_{k_n}\}$. $C$ is isomorphic to $F_7\oplus\omega$.
\end{proof}

We now proceed to the proof of the theorem.

\begin{proof}[Proof of Theorem \ref{thm1}] Since $P$ is not reversible, an order preserving bijection $f\colon P\to P$ and $u,v\in P$ exist such that $u\not\sim v$ and $f(u)\sim f(v)$. Without loss of generality, we may assume $f(u)>f(v)$. There are four possible relations $<,>,=,\not\sim$ between $f(u)$ and $u$ and between $f(v)$ and $v$, which gives 16 cases. Since $f(u)\leq u$ contradicts $f(v)\geq v$, only 12 cases are left.

By Lemma \ref{no_antichains}, if $f(u)>u$, we may and will assume that $\{u^{-k}\}_{k\in\mN}$ does not contain infinite antichains, and thus, by Lemmas \ref{main} and \ref{infinite_tail}.\ref{pt2},\ref{pt4}, there is an infinite descending chain in $u\!\downarrow$. The same considerations are also true if $u$ is replaced by $v$ and in the dual version. 

If $f(u)\not\sim u$ then we may assume that $u^n\not\sim u$ for all $n\in\mN$, and thus, by Lemma \ref{infinite_tail}.\ref{pt3}, $\{u^{-n}\}_{n\in\mN}$ is an infinite antichain. (Otherwise, replacing $f$ with some $f^n$ reduces the situation to one of the other cases.) Moreover, this implies $u^{-m}\not\sim u^{n}$ for $m,n\in\mN$. If $u^n<u^{n+k}$ or $u^n>u^{n+k}$ for some $n,k\in\mN$, then $\{u^{n+mk}\}_{m\in\mN}$ is an ascending, resp. descending chain, so $\{u^{n+mk}\}_{m\in\mN}\cup\{u^{-m}\}_{m\in\mN}$ is isomorphic to $F_1$, resp. $F_1^d$. Therefore, we will assume $\{u^{m}\}_{m\in\mZ}$ is an infinite antichain. The same holds if we replace $u$ with $v$.

\textit{1st case:} $f(u)>u, f(v)=v$. Since $u\not\sim v$, $u^{-k}\not\sim v$ for $k\in\mN$. Let $\{u^{-k_i}\}_{i\in\mN}$ be an infinite descending chain in $u\!\downarrow$. Then $\{u^{-k_i}\}_{i\in\mN}\cup\{u^n\}_{n\in\mN}\cup\{v\}$ is isomorphic to $F_5$.

\textit{2nd case:} $f(u)>u, f(v)>v$. Let $\{u^{-k_i}\}_{i\in\mN}$ be an infinite descending chain. The set $\{v^{-k_i}\}_{i\in\mN}$ is infinite, so it contains an infinite descending chain $\{v^{-k_{i_j}}\}_{j\in\mN}$. Put $k'_{i_j}=k_{i_j}-k_{i_0}$. Then $\{v^{-k'_{i_j}}\}_{j\in\mN}\subseteq v\!\downarrow$ and $\{u^{-k'_{i_j}}\}_{j\in\mN}\subseteq u\!\downarrow$. Lemma \ref{skyscraper} with $X=\{u^{-k'_{i_j}}\}_{j\in\mN}\cup \{v^{-k'_{i_j}}\}_{j\in\mN}$ says there is a $C\subseteq X$ with $C\cup \{u^n\}_{n\geq 1}=C\oplus \{u^n\}_{n\geq 1}$ isomorphic to $F_5$ or $F_6$.

\textit{3rd case:} $f(u)>u, f(v)<v$. There is an infinite ascending chain $\{v^{-k_i}\}_{i\in\mN}$ in $v\!\uparrow$. If $u>v^n$ for some $n\in\mN$, then $\{u\}\cup \{v^m\}_{m>n}\cup \{v^{-k_i}\}_{i\in\mN}$ is isomorphic to $F_5^d$, since $u\not\sim v^{-k_i}$ for any $i\in\mN$ ($u\geq v^{-k_i}>v$ and $u<v^{-k_i}$, which implies $u<u^{k_i}<v$, are both impossible). If $u>v^n$ for no $n\in\mN$, then $\{v^n\}_{n\geq 1}\cup\{u^n\}_{n\in\mN}$ is isomorphic to $F_5$. 

\textit{4th case:} $f(u)>u, f(v)\not\sim v$. $\{v^{-n}\}_{n\in\mN}$ is an infinite antichain and $v^{-k}>u^n$ is impossible for $k,n\in\mN$, since that would imply $v>u^{n+k}>u$. Thus, by Lemma \ref{no_antichains}, $\{v^{-n}\}_{n\in\mN}\cup \{u^n\}_{n\in\mN}$ contains a subset isomorphic to $F_1$, $F_4$ or $F_7$.

\textit{5th case:} $f(u)<u, f(v)<v$. Analogous to the 2nd case.

\textit{6th case:} $f(u)<u, f(v)\not\sim v$. $\{v^{-n}\}_{n\in\mN}$ is an infinite antichain. Since $v^{-k}\leq u^{n}$ for some $k,n\in\mN$ implies $v\leq u^{n+k}\leq u$, which is impossible, by the dual of Lemma \ref{no_antichains} $\{v^{-k}\}_{k\in\mN}\cup \{u^n\}_{n\in\mN}$ contains a subset isomorphic to $F_1^d$, $F_4^d$ or $F_7^d$.

\textit{7th case:} $f(u)=u, f(v)<v$. Analogous to the 1st case.

\textit{8th case:} $f(u)=u, f(v)\not\sim v$. $\{v^{n}\}_{n\in\mZ}$ is an infinite antichain, so $\{v^{n}\}_{n\in\mZ}\cup\{u\}$ is isomorphic to $F_2^d$.

\textit{9th case:} $f(u)\not\sim u, f(v)=v$. Analogous to the 8th case.

\textit{10th case:} $f(u)\not\sim u, f(v)>v$. Analogous to the 6th case.

\textit{11th case:} $f(u)\not\sim u, f(v)<v$. Analogous to the 4th case.

\textit{12th case:} $f(u)\not\sim u, f(v)\not\sim v$. $\{u^{n}\}_{n\in\mZ}, \{v^{n}\}_{n\in\mZ}$ are both infinite antichains. Note that $v^{n}\geq u^{k}$ for some $n,k\in\mZ$ implies $u^{n+|n|+|k|}\geq v^{n+|n|+|k|}\geq u^{k+|n|+|k|}$, which is, by our assumption, not true. So $\{u^n\}_{n\in\mZ}\cup \{v^n\}_{n\in\mZ}$ is a poset of height 1 with all elements $u^n$, $n\in\mZ$ maximal and all elements $v^n$, $n\in\mZ$ minimal. There are now several cases to consider.

Let $A(u^n)=\{v^m:m\in\mZ, v^m< u^n\}$, $B(u^n)=\{v^m:m\in\mZ, v^m\not< u^n\}$ and $A(v^n)=\{u^m:m\in\mZ, u^m> v^n\}$, $B(v^n)=\{u^m:m\in\mZ, u^m\not> v^n\}$ for $n\in\mZ$. 
\begin{enumerate}[i)]
\item \label{case1} If there is an $n\in\mZ$ such that both sets $A(u^n), B(u^n)$ are infinite, then $\{u^n\}\cup A(u^n)\cup B(u^n)$ is isomorphic to $F_2^d$. The same reasoning applies to the case when $A(v^n)$ and $B(v^n)$ are both infinite for some $n\in\mZ$.
\item \label{case2} If there is no $n$ with $B(u^n)$ infinite, then we may assume there is no $n$ with $B(v^n)$ infinite. Indeed, for fixed $m$ and $n$, $u^m$ is greater than infinitely many $v^k$, $k<n$, so $u^{m+(n-k)} > v^{n}$ for every such $k$. Thus, $A(v^n)$ is infinite. If $B(v^n)$ was infinite, we could reduce to \ref{case1}). Let $n_0=0$. If $n_m$ is defined, then an $n_{m+1}>n_m$ exists such that $u^n, u^{-n}\in A(v^{n_m})\cap A(v^{-n_m}), v^n, v^{-n}\in A(u^{n_m})\cap A(u^{-n_m})$ for all $n\geq n_{m+1}$. The set $\bigcup_{m\in\mN}\{u^{n_m},u^{-n_m},v^{n_m},v^{-n_m}\}$ is isomorphic to $F_8$.
\item \label{case3} If $A(u^N)$ is finite for some $N\in\mZ$, then $A(u^m)$ is finite for all $m\leq N$. Therefore, $A(v^n)\cap \{u^m\}_{m\leq N}$ is finite for every $n\in\mZ$, so $B(v^n)$ is infinite for every $n\in\mZ$. If $A(v^n)$ was infinite for some $n\in\mZ$, we could reduce to \ref{case1}), so we may assume it is finite. Therefore, $B(u^n)$ is infinite for every $n\in\mZ$, so we may assume $A(u^n)$ is finite for every $n\in\mZ$. Let $n_0=0$. If $n_m$ is defined, then an $n_{m+1}>n_m$ exists such that $u^n, u^{-n}\in B(v^{n_m})\cap B(v^{-n_m}), v^n, v^{-n}\in B(u^{n_m})\cap B(u^{-n_m})$ for all $n\geq n_{m+1}$. The set $\bigcup_{m\in\mN}\{u^{n_m},u^{-n_m},v^{n_m},v^{-n_m}\}$ is isomorphic to $F_3$.
\end{enumerate}
\end{proof}

\begin{remark}
Theorem \ref{thm1} and Corollary \ref{cor} can easily be extended to preorders by including the following preordered sets and their duals in the family of forbidden subsets.
\begin{align*} G_1&=Z_\infty\oplus D_\infty, & G_2&=Z_\infty\sqcup D_\infty, & G_3&=Z_\infty\oplus\omega, & G_4&=D_\infty\sqcup\bigsqcup_{i=1}^{\infty}Z_2.
\end{align*}
Here, $Z_\infty$ and $Z_2$ are the preorders of cardinality, respectively, $\aleph_0$ and $2$, such that $p\leq q$ and $q\leq p$ for every pair $p,q$ of their elements.
\end{remark}

\section{Topological consequences}
By specialization preorder of a topological space $(X,\tau)$ we mean the preorder $x\leq_\tau y\iff x\in \overline{\{y\}}$ on the set $X$. The assignment $(X,\tau)\to (X,\leq_\tau)$ is functorial. Recall that given a preorder $(P,\leq)$ one defines the Alexandroff $\mathcal{A}_\leq$ and the upper $\mathcal{U}_\leq$ topologies on the set $P$ to be, respectively, the finest and the coarsest topology on $P$ whose specialization preorder is $\leq$. Concretely, this means that $\mathcal{A}_\leq$ is generated by the basis of open sets $\{x\!\uparrow\}_{x\in P}$, while $\mathcal{U}_\leq$ is generated by the subbasis $\{P\smallsetminus (x\!\downarrow)\}_{x\in P}$. For more details see \cite{erne, kukiela}.

The following may be seen as the extension of \cite[Remark 12]{kukiela}, and thus also of the list of hereditarily reversible topological spaces given in \cite{wilansky}.

\begin{corollary}\label{cor_top}
Let $(P,\tau)$ be a topological space such that $\tau\in\{\mathcal{A}_\leq,\mathcal{U}_\leq\}$ for some preorder $(P,\leq)$. Then $P$ is hereditarily reversible if and only if $(P,\leq)$ is a hereditarily reversible preorder.
\end{corollary}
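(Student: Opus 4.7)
The plan is to reduce the statement to the preorder version of Corollary~\ref{cor} (extended by the Remark) via the functorial correspondence between preorders and the Alexandroff or upper topologies refining them. The essential idea is that for $\tau \in \{\mathcal{A}_\leq, \mathcal{U}_\leq\}$, homeomorphisms of $(P,\tau)$ coincide with order automorphisms of $(P,\leq)$, and continuous self-bijections are in particular order-preserving.

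I would begin by recording two basic facts that hold uniformly for $\tau\in\{\mathcal{A}_\leq,\mathcal{U}_\leq\}$: (i) for any $Q\subseteq P$, the specialization preorder of the subspace $(Q,\tau|_Q)$ coincides with $(Q,\leq|_Q)$, since closures in a subspace are intersections of closures with the subspace; and (ii) any order automorphism of $(Q,\leq|_Q)$ is a homeomorphism of $(Q,\tau|_Q)$, since it permutes the defining subbasis of $\tau|_Q$ (principal up-sets in the Alexandroff case, complements of principal down-sets in the upper case). For the ``if'' direction, a continuous bijection $g$ of any subspace $(Q,\tau|_Q)$ is by (i) and functoriality of specialization an order-preserving bijection of $(Q,\leq|_Q)$; hereditary reversibility of $(P,\leq)$ then makes $g$ an order automorphism, and (ii) promotes it to a homeomorphism.

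For the ``only if'' direction, I would argue contrapositively. If $(P,\leq)$ is not hereditarily reversible, then by the Remark it contains a subset isomorphic to one of the finitely many forbidden types $F_i,F_i^d,G_j,G_j^d$. On each such subset $Q$, the explicit bijection witnessing non-reversibility is order-preserving but not an order automorphism; in the Alexandroff case this bijection is automatically continuous, and a short direct inspection shows it remains continuous with respect to the subspace upper topology as well. This produces a continuous bijection of a subspace of $(P,\tau)$ that is not a homeomorphism.

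The main technical obstacle is the upper-topology ``only if'' direction: order-preserving maps are not automatically continuous in upper topologies, and subspaces of upper-topology spaces need not themselves be upper-topology spaces of the restricted order. Consequently, the nontrivial direction cannot be obtained purely by functorial transport and instead reduces to the case-by-case continuity verification on the (concrete and finite) list of forbidden sub-preorders produced by the Remark.
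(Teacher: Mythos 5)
Your plan coincides with the paper's proof: the ``if'' direction is obtained exactly as you describe, by turning a continuous self-bijection of a subspace into an order-preserving bijection via the specialization preorder, invoking hereditary reversibility of $(P,\leq)$ to get an order automorphism, and then recovering a homeomorphism from the fact that the topology is determined by the order; the ``only if'' direction is likewise handled by a direct check that each forbidden sub-preorder $F_i$, $G_j$ (and their duals) carries a non-reversible Alexandroff and upper topology. The obstacle you single out is precisely the one the paper records in the remark following the corollary (reversibility of $(P,\mathcal{U}_\leq)$ does not imply reversibility of $(P,\leq)$), which is why both you and the author fall back on the finite list of forbidden subsets rather than a purely functorial transport.
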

\begin{proof}
Let $(P,\leq)$ be a reversible preorder. Consider the topological space $(P,\tau)$, with $\tau\in\{\mathcal{A}_\leq,\mathcal{U}_\leq\}$. Let $f\colon P\to P$ be a  bijection which is continuous with respect to $\tau$. Then $f$ preserves the preorder $\leq_\tau=\leq$, and thus $f$ is a preorder isomorphism. But from the definition of $\tau$ it follows that $f$ is also a homeomorphism, and thus $(P,\tau)$ is a reversible topological space. Since for $Q\subseteq P$ with the induced preorder the appropriate Alexandroff or the upper topology agrees with the induced topology from $(P,\tau)$, this proves that $(P,\tau)$ is hereditarily reversible for hereditarily reversible preorders $(P,\leq)$.

On the other hand one easily checks that if $(P,\leq)$ is one of the preorders $F_i$, $i=1,\ldots,8$, or $G_j$, $j=1,\ldots, 4$, or their order duals, then $(P,\tau)$ with $\tau\in\{\mathcal{A}_\leq,\mathcal{U}_\leq\}$ is not a reversible topological space.
\end{proof}

\begin{remark}
A preorder $(P,\leq)$ is reversible if and only if $(P,\tau)$ is a reversible topological space for $\tau=\mathcal{A}_\leq$. However, the `if' part of this statement is not true for $\tau=\mathcal{U}_\leq$ (consider $P=\left(\bigsqcup_{n=1}^{\infty}(\omega\oplus D_\infty)\right)\sqcup \left(\bigsqcup_{n=1}^\infty(\omega\oplus\omega^d)\right)$.) Thus, in the second paragraph of the proof of Corollary \ref{cor_top} we had to take a different route.
\end{remark}

Note that the reasoning in the proof of Corollary \ref{cor_top} can be extended, at least in parts, to other `good enough' topologies $\mathcal{U}_P\subseteq\tau\subseteq\mathcal{A}_P$. In particular, if $\tau$ is such that any order automorphism of $(P,\leq)$ is a homeomorphism of $(P,\tau)$, then reversibility of $(P,\leq)$ implies reversibility of $(P,\tau)$. Moreover, we have the following general fact, which is an extension of \cite[Remark 12]{kukiela} to topologies other than the Alexandroff topology.

\begin{theorem} Let $(X,\tau)$ be a topological space such that $(X,\leq_\tau)$ (or its order dual) is a partially well ordered set (i.e.~a well founded poset without infinite antichains). Then $(X,\tau)$ is hereditarily reversible.
\end{theorem}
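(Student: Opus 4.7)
The plan is to work with the case where $(X,\leq_\tau)$ is partially well ordered (the order-dual case is symmetric) and to reduce hereditary reversibility to showing that any such $(X,\tau)$ is itself reversible. This reduction works because for any $Y\subseteq X$ the specialization preorder of the induced topology $\tau|_Y$ coincides with $\leq_\tau|_Y$, and partial well ordering is inherited by subsets.

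Given a $\tau$-continuous bijection $f\colon X\to X$, continuity forces $f$ to preserve $\leq_\tau$, and the result cited in the introduction from \cite{kukiela}, that partially well ordered posets are reversible, immediately makes $f$ an order automorphism of $(X,\leq_\tau)$. The remaining task is to prove that $f$ is open.

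The key structural ingredient is that every rank-level $X_\alpha=\{x:\operatorname{rank}(x)=\alpha\}$ is an antichain (two elements of equal rank are incomparable), hence finite by partial well ordering. Since $f$ is an order automorphism it preserves rank and permutes each finite $X_\alpha$, so every $f$-orbit is finite. Now fix $U\in\tau$ and write $U=A\!\uparrow$ with $A=\min(U)$; this $A$ exists by well-foundedness and is a finite antichain by partial well ordering. Let $N$ be the least common multiple of the orbit lengths of the finitely many elements of $A$. Then $f^N$ fixes $A$ pointwise, so $f^{-N}(A)=A$ and hence $f^{-(N-1)}(A)=f(A)$. Iterating $\tau$-continuity of $f$ exactly $N-1$ times yields $f^{-(N-1)}(U)\in\tau$, and since $f^{-(N-1)}(U)=f^{-(N-1)}(A)\!\uparrow=f(A)\!\uparrow=f(U)$, we conclude $f(U)\in\tau$, so $f$ is a homeomorphism.

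The main obstacle is precisely the finite-orbit argument for antichains: this is where partial well ordering is used essentially. Without the hypothesis, rank-levels could be infinite, orbits of $f$ on a finite antichain could be infinite, no finite power of $f$ would fix $A$ pointwise, and the identity $f^{-(N-1)}(A)=f(A)$ would have no substitute that could transfer openness of $U$ across to $f(U)$ through continuity alone.
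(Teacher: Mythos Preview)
Your proof is correct and follows essentially the same route as the paper: reduce to reversibility of $X$ itself, use that a continuous bijection is an order automorphism of $(X,\leq_\tau)$ because partially well ordered posets are reversible, and then exploit finiteness of rank-levels to find a power of $f$ fixing $\min(U)$ so that $f(U)=f^{-(N-1)}(U)$ is open by continuity. The only cosmetic difference is that the paper bundles $\min(U)$ into a finite union $L$ of level sets and takes a power of $f|_L$ equal to the identity, whereas you take the lcm of the individual orbit lengths; the underlying idea is identical.
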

\begin{proof}
Let $(X,\leq_\tau)$ be partially well ordered. We only need to prove that $X$ is reversible, since $(A,\leq_\tau)$ is partially well ordered for any subspace $A\subseteq X$.

Let us fix a continuous bijection $f\colon X\to X$. Let $U$ be an open set in $X$. We will show that $f(U)$ is open. Since $U$ is open, it is an up-set in $(X,\leq_\tau)$ (i.e.~$U=U\!\uparrow$). The poset $(X,\leq_\tau)$ is well-founded, so $U=\min(U)\!\uparrow$. But $\min(U)$ is an antichain in $(X,\leq_\tau)$, so it is finite.

Therefore, ordinals $\alpha_1,\ldots,\alpha_n$ exist such that $\min(U)\subseteq X_{\alpha_1}\cup\ldots\cup X_{\alpha_n}=L$, where for an ordinal $\alpha$ the (finite) set $X_{\alpha}=\min\left(X\smallsetminus \bigcup_{\beta<\alpha}X_\beta\right)$ is the $\alpha$-th level set of $(X,\leq_\tau)$ (see for example \cite[p.122]{kukiela}). 

We know that $f$ is an order isomorphism of $(X,\leq_\tau)$ (see \cite[Corollary 11]{kukiela}). In particular, the map $f$ is level-preserving (see for example \cite[proof of Proposition 10]{kukiela}), so $f|_L\colon L\to L$ is a bijection. But $L$ is finite, and thus $(f|_L)^n=\id_L$ for some $n>0$. In particular, $f^n(\min(U))=\min(U)$.

Because $f$ is an order isomorphism, we have \[f^n(U)=f^n(\min(U)\!\uparrow)=f^n(\min(U))\!\uparrow=\min(U)\!\uparrow=U.\] But, since $f$ is continuous, this means that \[f(U)=f^{-(n-1)}(f^n(U))=f^{-(n-1)}(U)\] is an open set.

Replacing `open' by `closed', `up' by `down', `well-founded' by `dually well-founded', `$\min$' by `$\max$' and `$\uparrow$' by `$\downarrow$' we get a proof of reversibility of $(X,\tau)$ for $(X,\leq_\tau)$ dually well ordered.
\end{proof}

%
%

\end{document}